\documentclass{amsart}
\usepackage{amssymb}
\usepackage{graphicx}
\usepackage[hidelinks]{hyperref}

\newtheorem{theorem}{Theorem}[section]
\newtheorem{lemma}[theorem]{Lemma}
\newtheorem{proposition}[theorem]{Proposition}
\numberwithin{equation}{section}

\newcommand{\R}{\mathbb R}
\newcommand{\Pp}{\mathcal P}
\newcommand{\Q}{\mathcal Q}

\begin{document}

\title[Minimality of wide strips]
{Sufficiently wide strips uniquely minimize the planar periodic fractional perimeter}

\author[J. Seo]{Juneyoung Seo}
\address{Innovation Center for Atomic Science, DAU G-LAMP Project Group,
Dong-A University, Busan 49315, Republic of Korea}
\email{juneys.dau@gmail.com}
\thanks{This research was supported by Global - Learning \& Academic research institution for Master’s$\cdot$PhD students, and Postdocs (LAMP) Program of the National Research Foundation of Korea (NRF) grant funded by the Ministry of Education (RS-2025-25440216).}

\subjclass[2020]{49Q20, 35R11, 49Q10}
\keywords{Fractional perimeter, periodic isoperimetric problem,
rearrangement, large-area minimizers}

\date{}

\begin{abstract}
We study the fractional perimeter per period of planar sets with prescribed horizontal period and area per period. We prove that sufficiently wide strips are the unique minimizers among all measurable competitors, up to vertical translation and null sets. For period one, a half-width of at least $14$ suffices for every fractional exponent $s\in(0,1)$. The proof first replaces each vertical section by a centered interval of the same length. We then establish a quantitative lower bound for the perimeter excess in terms of the deviation of the resulting half-width profile from its mean. The main difficulty is that the perimeter of a strip is a concave function of its width. We show that, at large mean width, the interaction cost of unequal sections dominates the corresponding concavity deficit and leaves a positive remainder for every nonconstant profile. Estimates at small and large horizontal separations make the sufficient width bound independent of $s$. The argument applies to arbitrary nonnegative integrable profiles, including unbounded profiles and profiles that vanish on sets of positive measure.
\end{abstract}

\maketitle

\section{Introduction and main result}
The fractional perimeter measures the interaction between a set and its complement through a kernel that decays as a power of the distance. For sets that repeat in one direction, the corresponding isoperimetric problem asks which shapes minimize this interaction per period at a prescribed area per period. We prove that, in the plane, sufficiently large area forces every minimizer to be a straight strip. The sufficient area threshold, expressed relative to the period, is independent of the fractional exponent.

Fix $s\in(0,1)$ and let $I=(0,1)$. Throughout the paper, periodicity means period one in the first coordinate. For a measurable periodic set $E\subset\R^2$, define its fractional perimeter per period by
\begin{equation}\label{e:perimeter}
 \Pp_s(E)=
 \int_{E\cap(I\times\R)}
 \int_{\R^2\setminus E}
 |X-Y|^{-2-s}\,dY\,dX.
\end{equation}
The outer integral is restricted to one period cell, while each point interacts with the whole complement, including all other cells. This is the periodic formulation studied in \cite{CCM, DDPV}, with the fractional-perimeter kernel of \cite{CRS}. We prescribe
\[
 |E\cap(I\times\R)|=2a
\]
and compare $E$ with the horizontal strip
\[
 S_a=\R\times(-a,a),
\]
which has the same area per period.

D\'avila, del Pino, Dipierro, and Valdinoci \cite{DDPV} constructed minimizers in a restricted class of cylindrically symmetric periodic sets whose radius profiles are even and nonincreasing on half a period. They also asked whether straight cylinders minimize at large volumes. Cabr\'e, Csat\'o, and Mas \cite{CCM} subsequently established existence and symmetry among all measurable periodic competitors and formulated the conjecture that straight cylinders are the only minimizers for sufficiently large volumes. In dimension two, these cylinders are strips. Our main theorem establishes this conjecture in the plane.

\begin{theorem}\label{t:main}
For every $s\in(0,1)$ and every $a\ge14$, each measurable periodic set $E\subset\R^2$ with $|E\cap(I\times\R)|=2a$ satisfies
\[
 \Pp_s(E)\ge\Pp_s(S_a).
\]
Equality holds if and only if $E=S_a+(0,z_0)$ up to a null set for some $z_0\in\R$.
\end{theorem}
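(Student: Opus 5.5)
We follow the route indicated in the abstract. First we symmetrize, then we write the perimeter as a functional of the resulting half-width profile, and finally we compare that functional with its value at the constant profile.

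\emph{Step 1: Steiner symmetrization in the vertical direction.} For a.e.\ $x\in I$ let $2h(x)$ be the length of the section $E_x=\{y:(x,y)\in E\}$. Let $E^*$ be the periodic set whose section at $x$ is the centered interval $(-h(x),h(x))$. The fractional perimeter can be written as
\[
\Pp_s(E)=\iint \bigl(|E_x||E^c_{x'}|\text{-type terms}\bigr)
\]
through the one-dimensional kernel $K_\delta(t)=(\delta^2+t^2)^{-1-s/2}$, where $\delta=x-x'$ ranges over $\R$ (the periodic copies included). By Riesz's rearrangement inequality, $\int_{E_x}\int_{E_{x'}}K_\delta(y-y')$ increases under centering. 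For sets of finite measure we have $\int_{E_x}\int_{E^c_{x'}}K=|E_x|\,\|K_\delta\|_1-\int_{E_x}\int_{E_{x'}}K$, so $\Pp_s(E)\ge\Pp_s(E^*)$. For equality we use strict Riesz, as $K_\delta$ is strictly decreasing: equality forces all sections to be intervals whose centers coincide a.e. The terms with $\delta=0$ need a separate one-dimensional argument, handled by the fractional isoperimetric inequality on lines. The upshot is that each $E_x$ is an interval and the centers are constant. It therefore suffices to prove $\Pp_s(E^*)\ge\Pp_s(S_a)$ with equality only when $h\equiv a$.

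\emph{Step 2: Reduction to a profile functional.} For centered intervals, define
\[
F_\delta(h,k)=\int_{-h}^{h}\int_{\R\setminus(-k,k)}K_\delta(y-y')\,dy'\,dy .
\]
Then
\[
\Pp_s(E^*)=\int_I\int_\R F_{x-x'}(h(x),h(x'))\,dx'\,dx,
\]
where $h$ is extended periodically, and the constraint is $\int_I h=a$. Split $F_\delta(h,k)=\tfrac12\bigl(F_\delta(h,h)+F_\delta(k,k)\bigr)+G_\delta(h,k)$. Here $G_\delta\ge0$ is the interaction cost of unequal neighbouring sections; it behaves like $c\,\phi_\delta(|h-k|)$ with $\phi_\delta(t)=\int_0^t\int_0^t K_\delta(u+v)\,du\,dv$. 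The diagonal part equals $\int_I \sigma(h(x))\,dx$, where $\sigma(h)=\Pp_s(S_h)$ is the strip perimeter. A direct computation shows that $\sigma$ is concave and increasing, with $\sigma''(h)\approx -C_s h^{-1-s}$ for large $h$. By Jensen's inequality, $\int\sigma(h)\le\sigma(a)$ goes the wrong way. This concavity deficit, $\sigma(a)-\int_I\sigma(h)$, must be beaten by $\iint G$.

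\emph{Step 3: The main obstacle, a quantitative comparison.} We aim to prove
\[
\iint G_{x-x'}(h(x),h(x'))\,dx'\,dx\ \ge\ \sigma(a)-\int_I\sigma(h)+c\int_I|h-a|\,\min(1,|h-a|)
\]
for all $h\ge0$ with $\int_I h=a\ge14$. This is the main difficulty, and the plan for it is as follows. Bound the deficit from above by $\int_I\min\{C(h-a)^2a^{-1-s},\,\sigma\text{-increment}\}$, using concavity and the asymptotics $\sigma(h)=\sigma_\infty-c_s h^{-s}$. The large deviations are controlled by the boundedness of $\sigma$. Bound $G$ from below using only separations $|\delta|\le1/2$, which give the near-field lower bound $G_\delta\gtrsim\min\bigl(|h-k|^{1-s},\,|h-k|^2\delta^{-1-s}\bigr)$, together with the far field $|\delta|\ge1/2$, where $K_\delta\sim\delta^{-2-s}$. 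Then compare $h(x)-h(x')$ with $h-a$ via averaging over $x'$, in the manner of a Poincar\'e inequality. The constants $(1-s)^{-1}$ and $s^{-1}$ appear on both sides; tracking them separately at small and large $\delta$ should give uniformity in $s$. The factor $a^{-1-s}$ on the deficit side, set against the $a$-independent interaction, then yields the explicit threshold $a\ge14$. Unbounded profiles and profiles vanishing on sets of positive measure are covered because only the integrability of $h$ and the bound $\sigma\le\sigma_\infty$ enter. Strictness of the remainder gives $h\equiv a$ in the equality case, and Step 1 then finishes the proof.
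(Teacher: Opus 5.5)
Your Steps 1 and 2 agree with the paper's reduction. Both use transverse Steiner symmetrization, with strict Riesz for the equality case, and then split $\Pp_s(E^*)$ into the average strip energy $\int_I\sigma(h)$ plus a nonnegative interaction cost. Two small corrections there:
\begin{itemize}
\item The set $\{\delta=0\}$ is null, so no one-dimensional isoperimetric argument is needed.
\item Your $G_\delta$ is nonnegative only after two steps. First, the antisymmetric term $c_s|\delta|^{-1-s}(h-k)$, which is not integrable in $\delta$, must be cancelled by periodicity at fixed $\delta$. Second, concavity of the diagonal part must be used.
\end{itemize}
The genuine gap is Step 3. It carries the entire quantitative content of the theorem, yet it is only a plan, and its ingredients are wrong. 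By scaling, $\sigma(h)=\Pp_s(S_h)=b_sh^{1-s}$ exactly, so $\sigma$ is unbounded and there is no $\sigma_\infty$. Consequently the pointwise deficit $\sigma(a)+\sigma'(a)(h-a)-\sigma(h)$ grows linearly as $h\to\infty$. Large deviations must therefore be paid for by linear growth of the interaction, not by boundedness of $\sigma$. Your description of the interaction is also off. Your $\phi_\delta$ saturates at $|\delta|^{-s}/s$, and $\min\{|h-k|^{1-s},|h-k|^2\delta^{-1-s}\}$ has the wrong homogeneity. The true cost of sections differing by $r$ at horizontal distance $\delta$ is $|\delta|^{-s}G_1(r/|\delta|)$. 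This is comparable to $\min\{r^2|\delta|^{-2-s},r|\delta|^{-1-s}\}$, and it is linear in $r$ because the excess of the longer section faces a whole complementary half-line.

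More seriously, the mechanism you propose cannot give a threshold independent of $s$. The factor $a^{-1-s}$ only occurs in the quadratic regime. As $s\downarrow0$, the deficit and the long-range interaction agree to leading order $c_s/s$. The comparison is then decided at the next order, where $a$ enters only through $\log a$. In exactly this regime, your Poincar\'e-type averaging of $h(x)-h(x')$ down to $h-a$ loses a factor $2$. To see this, take $u=a/\epsilon$ on a set of measure $\epsilon$ and $u=0$ elsewhere, with $s\log a$ small and $\epsilon\to0$:
\begin{itemize}
\item The deficit is about $b_sa^{1-s}\approx4c_sa/s$.
\item The pairwise long-range cost $2c_s\int_I\int_I F_s(|u(x)-u(y)|)\approx4c_sa/s$ matches it, because both tall--zero and zero--tall pairs see the jump $a/\epsilon$.
\item After averaging, the zero part sees only the deviation $a$, where the cost is $O(a\log a)$. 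Only about $2c_sa/s$ survives.
\end{itemize}
The missing ideas are the following:
\begin{itemize}
\item Bound the deficit itself by pairwise differences, $2D_s(u)\le\int_I\int_I B_{s,a}(|u(x)-u(y)|)$, compare pointwise in $r=|u(x)-u(y)|$, and use Jensen only on a reserved remainder.
\item Use a long-range lattice bound with the sharp constant, $\sum_{|j+\rho|\ge1}K_{j+\rho}(z)\ge c_s(z+3\pi/4)^{-1-s}$.
\item Use a dyadic bound with factor $(1-2^{s-1})^{-1}$ for $|t|<1$, which handles $s\uparrow1$.
\end{itemize}
Only with these does $14$ emerge, from the explicit inequality $(1-s)(4a/3\pi)^s+\frac{9s}{32}\frac{a^{1+s}}{1+a}\ge1+s2^{-s}$. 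Its first-order term at $s=0$ requires $\log\frac{4a}{3\pi}\ge2-\frac{9a}{32(1+a)}$. Your sketch contains none of this, so the claim that it yields $a\ge14$ is unsupported.
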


For a prescribed horizontal period $L>0$ and area $m$ per period, the sufficient condition becomes $m\ge28L^2$, and the minimizing strip has half-width $m/(2L)$. Indeed, simultaneous dilation of the set and period by $L$ multiplies area by $L^2$ and perimeter by $L^{2-s}$. The bound $14$ is not optimized, and the theorem makes no claim about the precise area at which strips become minimizing.

The behavior at small volumes is different. The restricted minimizers of \cite[Theorem 3]{DDPV} are close in measure to periodic arrays of balls as the volume tends to zero. Moreover, straight cylinders fail to minimize at sufficiently small volumes, with a threshold uniform for $s$ bounded away from zero \cite[Theorem 1.1(iv)]{CCM}.

Related results concern critical points and their stability. For smooth sets, the Euler--Lagrange equation under the area constraint requires the boundary to have constant nonlocal mean curvature. Nonstraight periodic planar solutions bifurcating from strips were constructed in \cite{CFSW}. Stability means that the second variation is nonnegative under periodic variations preserving area; a stability radius for straight cylinders was announced in \cite[Theorem 1.3]{CCM}. In the even, transversely symmetric planar class, Bruera \cite[Corollary 1.7]{Bruera2026} proves that stable constrained critical points sufficiently close to a fixed strip in $C^{1,\alpha}$, for suitable $\alpha>s$, are themselves strips. This holds for $s$ sufficiently close to one, and for every $s\in(0,1)$ when the reference half-width differs from the first bifurcation radius. Such local rigidity does not determine whether a strip has less perimeter than every competitor of the same area. Theorem~\ref{t:main} provides that global comparison for large area, without identifying its sufficient width bound with a stability or bifurcation radius. Existence, symmetry, and planar bifurcation results for anisotropic kernels are developed in \cite{AlcoverBruera}.

Our proof reduces the global comparison to an inequality for functions of one variable. Given $E$, replace each vertical section
\[
 E_x=\{z\in\R:(x,z)\in E\}
\]
by the centered interval of the same length. This operation, called transverse Steiner symmetrization, produces
\[
 E^\star=E_u,
 \qquad
 E_u=\{(x,z)\in\R^2:|z|<u(x)\},
 \qquad
 u(x)=\tfrac12|E_x|
\]
for almost every $x\in I$. We identify profiles on $I$ with their periodic extensions. The area constraint gives $u\ge0$ and $\int_Iu\,dx=a$, while the rearrangement inequality yields
\[
 \Pp_s(E^\star)\le\Pp_s(E)
\]
by \cite[Lemma 3.1]{CCM}. It therefore suffices to compare $E_u$ with the constant profile $u\equiv a$.

The difficulty is that symmetrization leaves an arbitrary integrable half-width profile. In particular, averaging the energies of strips of different widths does not favor the constant profile: direct computation gives
\[
 \Pp_s(S_a)=b_sa^{1-s},
\]
where $b_s>0$ depends only on $s$, and concavity implies
\[
 b_s\int_I u^{1-s}\,dx
 \le b_sa^{1-s}.
\]
The profile representation of the perimeter supplies a lower bound consisting of this average strip energy and a nonnegative interaction cost between unequal widths. We show that, when the mean width is sufficiently large, the interaction cost exceeds the concavity deficit
\[
 b_s\left(a^{1-s}-\int_I u^{1-s}\,dx\right)
\]
by a quantity that controls the variation of $u$.

\begin{proposition}\label{p:profile}
For every $s\in(0,1)$ and every nonnegative periodic $u\in L^1(I)$ with mean $a=\int_Iu\,dx\ge14$,
\begin{equation}\label{e:uniform-profile}
 \Pp_s(E_u)-\Pp_s(S_a)
 \ge
 \frac{1}{8(1-s)}
 \int_I
 \frac{|u(x)-a|^2}{1+|u(x)-a|}
 \,dx.
\end{equation}
\end{proposition}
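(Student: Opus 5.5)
We first write $\Pp_s(E_u)$ as an integral over pairs of vertical sections. For $X=(x,z)$ and $Y=(y,w)$ we put $h=y-x$ and $k_h(t)=(h^2+t^2)^{-1-s/2}$. Since $E_u$ is periodic, $\Pp_s(E_u)=\int_I\int_\R c_h(u(x),u(x+h))\,dh\,dx$, where
\[
 c_h(u,v)=\int_{-u}^{u}\int_{\R\setminus(-v,v)}k_h(z-w)\,dw\,dz .
\]
Let $\Psi_h$ be the even convex function with $\Psi_h''=k_h$ and $\Psi_h(0)=\Psi_h'(0)=0$. Then $\int_{-u}^u\int_{-v}^v k_h=\Psi_h(u+v)-2\Psi_h(u-v)+\Psi_h(u-v)$-type combinations, up to the exact bookkeeping. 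Symmetrizing in $(x,x+h)$ should therefore give the identity
\[
 c_h(u,v)+c_h(v,u)=c_h(u,u)+c_h(v,v)+2J_h(|u-v|),
\]
with $J_h(d)=2\Psi_h(d)\ge0$, which is convex and increasing in $d$. Integrating $c_h(u,u)$ over $h\in\R$ gives the strip energy $b_su^{1-s}$. We therefore expect
\[
 \Pp_s(E_u)=b_s\int_Iu^{1-s}\,dx+\int_I\int_\R J_h\bigl(|u(x)-u(x+h)|\bigr)\,dh\,dx,
\]
valid in $[0,\infty]$ for every nonnegative $u\in L^1$, including unbounded profiles and profiles vanishing on sets of positive measure. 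Folding $h$ into the cell, we write the second term as $\int_I\int_I\widetilde J_{x-y}(|u(x)-u(y)|)\,dy\,dx$, where $\widetilde J_h=\sum_{k\in\mathbb Z}J_{h+k}$ is still convex in $d$. Jensen's inequality in $y$ then gives $\int_I\widetilde J(|u(x)-u(y)|)\,dy\ge\inf_h\widetilde J_h(|u(x)-a|)$, using $\int_I u(y)\,dy=a$. This turns the interaction into a pointwise function of $d(x)=|u(x)-a|$.

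Next we bound both sides pointwise in $d=|u-a|$. For the concavity deficit we use the tangent line at $a$, together with $\int_I(u-a)\,dx=0$:
\[
 b_s\Bigl(a^{1-s}-\int_Iu^{1-s}\Bigr)=b_s\int_I\bigl[a^{1-s}+(1-s)a^{-s}(u-a)-u^{1-s}\bigr]dx .
\]
The integrand is at most $C\,b_s s(1-s)a^{-1-s}d^2$ when $d\le a/2$, and at most $C\,b_s(1-s)a^{-s}d$ otherwise, including the range $u\in[0,a/2]$. We also need $b_s$ explicitly, which we expect to be of order $1/(s(1-s))$ via a Beta-function computation. For the interaction we need a lower bound uniform in $h\in I$. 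When $d$ is small compared with the separation, $J_{h}(d)\approx d^2|h|^{-2-s}$, and summing over $|k|\ge1$ gives $\widetilde J_h(d)\gtrsim d^2\cdot\frac{1}{1+s}$, with no decay in $a$. When $d\ge1$, the near images with $|h+k|\le d$ give linear growth $\widetilde J_h(d)\gtrsim d\int_{|h|\le d}|h|^{-1-s}\cdots$, and this again contains a factor of order $1/(1-s)$ coming from the small-separation region. Together these should yield $\widetilde J_h(d)\ge\frac{c}{1-s}\frac{d^2}{1+d}$ plus a reserve.

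We then compare the two bounds. The deficit carries the factor $a^{-1-s}$ in the quadratic regime and $a^{-s}$ relative to $d$ in the linear regime, multiplied by $b_s s(1-s)\sim1$ or $b_s(1-s)\sim1/s$. The interaction lower bound does not decay in $a$. Hence for $a\ge14$ the interaction absorbs twice the deficit and leaves $\frac{1}{8(1-s)}\frac{d^2}{1+d}$.

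The main obstacle is making these constants uniform in $s\in(0,1)$. As $s\to0$, the deficit in the linear regime carries a $1/s$, which must be matched by the $1/s$ in the large-separation tail of $\widetilde J$, namely $\sum_k|k|^{-1-s}$. As $s\to1$, the $1/(1-s)$ comes from small separations. We therefore plan to split the $h$-integral into $|h|\le1$ and $|h|>1$, and to estimate $\Psi_h$ in each regime by elementary inequalities such as $(1+t^2)^{-1-s/2}\ge2^{-1-s/2}$ for $t\le1$. We then check that the worst case, which should occur at $d\approx a$ with $s\to0$, is covered once $a\ge14$. We expect this numerical verification, rather than the structural identity, to be where the work lies.
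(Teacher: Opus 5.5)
\textbf{There is a genuine gap.} It is the reduction of $\Q_s(u)$ to a pointwise function of $d(x)=|u(x)-a|$ by Jensen with a convex minorant $\phi\le\inf_h\widetilde J_h$ (the infimum itself need not be convex).

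Any such $\phi$ lies below $\widetilde J_{1/2}=2\sum_{k\in\mathbb Z}G_{k+1/2}$, where your $\Psi_h$ is the paper's $G_h$. This function sees no separation shorter than $\tfrac12$, and the loss is fatal at both ends of the range of $s$.

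\emph{Near $s=1$.} $\widetilde J_{1/2}(d)\le C\min\{d,d^2\}$ with $C$ independent of $s\in[\tfrac12,1)$. So your claimed bound $\widetilde J_h(d)\ge\frac{c}{1-s}\frac{d^2}{1+d}$ is false at $h=\tfrac12$, and the remainder $\frac{1}{8(1-s)}\int_I\Psi(u-a)\,dx$ cannot come out.

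\emph{Near $s=0$.} Here the scheme cannot even give $\Q_s(u)\ge b_sD_s(u)$. Your tangent-line expression for $b_sD_s(u)$ is exact, and its integrand grows like $b_s(1-s)a^{-s}u=2^{2-s}c_sa^{-s}u/s$. By contrast, $\widetilde J_{1/2}$ has asymptotic slope $c_s\sum_k|k+\tfrac12|^{-1-s}=2c_s(2^{1+s}-1)\zeta(1+s)=2c_s/s+O(1)$. Since $a^{-s}\to1$, no threshold on $a$ repairs this; for $a=14$ the comparison already fails at $s=0.1$. The critical regime is therefore $d\to\infty$, not $d\approx a$.

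This is not an artifact of comparing pointwise. Take $u=(a/\theta)\chi_{(0,\theta)}$ with $\theta\to0$. Your lower bound $\int_I\phi(|u-a|)\,dx$ tends to at most $\widetilde J_{1/2}(a)+2c_s(2^{1+s}-1)\zeta(1+s)\,a=2c_sa/s+O(1)$. Meanwhile $b_sD_s(u)\to b_sa^{1-s}=4c_sa/s+O(1)$. Jensen credits each point where $u=0$ with only $\phi(a)$, which stays bounded as $s\to0$, although these points carry half of the long-range interaction with the spike.

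Separately, your symmetrized identity omits the term $2G_h(2u)+2G_h(2v)-4G_h(u+v)\ge0$. It is therefore really the inequality of Lemma~\ref{l:representation}; that part is harmless.

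\textbf{How the paper avoids both losses.} It compares pairwise and applies Jensen only to what is left over.
\begin{itemize}
\item Lemma~\ref{l:direct-deficit} bounds $2D_s(u)$ by $\int_I\int_IB_{s,a}(|u(x)-u(y)|)\,dy\,dx$. This spreads the deficit over ordered pairs, with large-$r$ slope $\frac{b_s}{2}(1+s)a^{-s}$.
\item Lemma~\ref{l:long} supplies $2c_sF_s(r)$ per ordered pair, with slope $2c_s\kappa^{-s}/s$, where $\kappa=3\pi/4$.
\item The ratio of these slopes is $(1-s)(2a/\kappa)^s/(1+s)=1+s(\log(2a/\kappa)-2)+O(s^2)$. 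As $s\to0$ the margin is only of relative order $s$, so there is no room for the factor two your Jensen step discards.
\item The factor $(1-s)^{-1}$ comes from Lemma~\ref{l:dyadic}. There, telescoping and convexity of $\Psi$ transfer the averaged difference at separations $\rho\in(\tfrac12,1)$ to every dyadic scale $2^{-k}\rho$. This is a property of $u$, not of the folded kernel, so no bound uniform in $h$ can reproduce it.
\end{itemize}
Only after \eqref{e:combined-cost} has absorbed $b_sD_s(u)$ pair by pair does the paper apply Jensen, and then only to $J=\int_I\int_I\Psi(u(x)-u(y))\,dy\,dx$.
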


The integrand on the right is quadratic for small deviations and grows linearly for large deviations. Thus the remainder is finite for every integrable profile and vanishes precisely when $u=a$ almost everywhere. For an arbitrary competitor $E$, symmetrization gives the same lower bound with $u(x)=|E_x|/2$, so the estimate controls variation in the lengths of its vertical sections. If equality holds in Theorem~\ref{t:main}, these lengths must be constant and equality must also hold in symmetrization. The latter forces the sections to be intervals with a common center, giving uniqueness up to vertical translation and null sets. Figure~\ref{fig:reduction} illustrates the two comparisons.

\begin{figure}[ht]
\centering
\includegraphics[width=\textwidth]{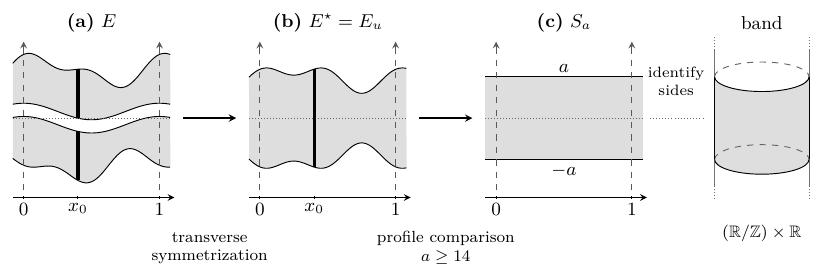}
\caption{The reduction at fixed area $2a$: transverse Steiner symmetrization gives $\Pp_s(E)\ge\Pp_s(E^\star)$, and Proposition~\ref{p:profile} gives $\Pp_s(E^\star)\ge\Pp_s(S_a)$ for $a\ge14$. The marked sections in (a) and (b) have length $2u(x_0)$. Matching arrows identify the sides of each period cell; dotted horizontal lines mark $z=0$. The inset shows the strip as a band on the cylinder.}
\label{fig:reduction}
\end{figure}

To prove Proposition~\ref{p:profile}, we first bound the concavity deficit in terms of pairwise differences $|u(x)-u(y)|$. We then estimate the interaction cost separately at horizontal separations smaller and larger than one period. At small separations, a dyadic argument supplies the factor $(1-s)^{-1}$ needed as $s\uparrow1$. At large separations, a bound on the interaction across translates of the period cell retains the coefficient needed as $s\downarrow0$. These complementary estimates allow a single sufficient width bound for the whole range $s\in(0,1)$. Section~\ref{s:proofs} develops the profile representation and these estimates, and Section~\ref{ss:mainproof} combines them through a pointwise comparison. All estimates apply directly to nonnegative $L^1$ profiles, including unbounded profiles and profiles with empty sections, without regularity or closeness to a strip.

\section{Profile comparisons}\label{s:proofs}
\subsection{Symmetrization and the profile representation}\label{ss:reduction}
The periodized-kernel formulation in \cite{DDPV} agrees with \eqref{e:perimeter}, since periodicity and Tonelli's theorem give
\[
 \Pp_s(E)=\int_{E\cap(I\times\R)}\int_{(I\times\R)\setminus E}
 \sum_{k\in\mathbb Z}|X-Y-(k,0)|^{-2-s}\,dY\,dX.
\]

For $t\ne0$, set
\[
 K_t(z)=(t^2+z^2)^{-(2+s)/2}.
\]
The transverse rearrangement inequality \cite[Lemma 3.1]{CCM}, after dilation from period $2\pi$ to period $1$, gives
\begin{equation}\label{e:rearrangement}
 \Pp_s(E^\star)\le\Pp_s(E).
\end{equation}
For positive finite area and finite perimeter, equality holds precisely when $E=E^\star+(0,z_0)$ up to a null set.

For completeness, we recall why equality forces a single vertical translation. Write $E_x=\{z:(x,z)\in E\}$, and let $E_x^\star$ be its centered rearrangement. For almost every $(x,y)\in I\times\R$ with $x\ne y$, the Riesz inequality gives the finite nonnegative deficit
\[
 \delta(x,y)=
 \int_{E_x^\star}\int_{E_y^\star}K_{x-y}(z-w)\,dw\,dz
 -\int_{E_x}\int_{E_y}K_{x-y}(z-w)\,dw\,dz\ge0.
\]
For each such pair, both the set interaction and the complement interaction are bounded by $|E_x|\|K_{x-y}\|_1<\infty$. 
Write the latter as $C(x,y)$ and its rearranged counterpart as
$C^\star(x,y)$. Since corresponding sections have equal lengths,
we obtain the finite identity
\[
 C(x,y)=|E_x|\|K_{x-y}\|_1
 -\int_{E_x}\int_{E_y}K_{x-y}(z-w)\,dw\,dz
 =C^\star(x,y)+\delta(x,y).
\]
Integrating this nonnegative identity by Tonelli, without subtracting total set interactions, yields
\[
 \Pp_s(E)=\Pp_s(E^\star)+\int_I\int_\R\delta(x,y)\,dy\,dx.
\]
For equal finite perimeters, $\delta=0$ almost everywhere. The strict Riesz inequality, in the form used in \cite[Lemma 3.1]{CCM}, makes any such pair of sections of positive measure intervals with the same center, since $K_{x-y}$ is strictly decreasing in $|z|$. Put $A_+=\{x\in I:0<|E_x|<\infty\}$, which has positive measure. Choose $x_0\in A_+$ from the full-measure set supplied by Fubini, so that almost every $y\in I$ is a good partner. A partner in $A_+$ first makes $E_{x_0}$ an interval with a unique center $z_0$; almost every other positive section then has this same center. Zero sections agree with their translated rearrangements modulo null sets. Tonelli and periodicity extend this sectionwise agreement to all of $\R^2$ modulo a planar null set.

For centered interval sections, the interaction can be computed by integrating the kernel twice. Define
\[
 G_t(r)=\int_0^{|r|}(|r|-z)K_t(z)\,dz
\]
and the nonnegative interaction
\[
 \Q_s(u)=2\int_\R\int_I G_t(u(x)-u(x-t))\,dx\,dt.
\]
The term $\Q_s(u)$ measures the interaction between unequal section widths and vanishes on constant profiles. Set
\begin{equation}\label{e:constants}
 c_s=\int_\R(1+t^2)^{-(2+s)/2}\,dt,\qquad
 b_s=\frac{2^{2-s}c_s}{s(1-s)}.
\end{equation}
Comparison with the exponents $3/2$ and $1$ gives $2\le c_s\le\pi$.
The planar profile representation appears in Alviny\`a's thesis \cite{Alvinya}. We derive the following lower bound directly for all nonnegative $L^1$ profiles, allowing infinite perimeter.

\begin{lemma}\label{l:representation}
Every nonnegative periodic $u\in L^1(I)$ satisfies
\[
 \Pp_s(E_u)\ge\Q_s(u)+b_s\int_Iu^{1-s}\,dx.
\]
\end{lemma}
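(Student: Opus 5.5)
The plan is to compute $\Pp_s(E_u)$ section by section and reduce the lemma to a pointwise inequality for two intervals. By periodicity and Tonelli (all integrands are nonnegative, so infinite values cause no trouble), write $Y=(x-t,w)$ and use that $E_u$ has sections $(-u(y),u(y))$. This gives
\[
 \Pp_s(E_u)=\int_\R\int_I \Phi_t\bigl(u(x),u(x-t)\bigr)\,dx\,dt,
 \qquad
 \Phi_t(p,q)=\int_{-p}^{p}\int_{|w|\ge q}K_t(z-w)\,dw\,dz .
\]
The substitution $(x,t)\mapsto(x-t,-t)$ preserves $dx\,dt$ on $I\times\R$ modulo periodicity, and $K_{-t}=K_t$. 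Hence I may replace $\Phi_t(p,q)$ by its symmetrization $\tfrac12[\Phi_t(p,q)+\Phi_t(q,p)]$, which depends only on $m=\min(p,q)$ and $M=\max(p,q)$.

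Next I compute $\Phi_t$ exactly via overlaps. Since $\Phi_t(p,q)=\int_\R K_t(\tau)\bigl(2p-\ell(\tau)\bigr)\,d\tau$ with $\ell(\tau)=|(-p,p)\cap((-q,q)+\tau)|=\min(2m,\,M+m-|\tau|)_+$, I split $2p-\ell=2(p-q)_++\bigl(2m-\ell\bigr)$. Set $d=M-m$. The symmetrized integrand becomes
\[
 d\,\|K_t\|_1+2\int_d^\infty K_t(\tau)\min(\tau-d,2m)\,d\tau .
\]
Subtracting $2G_t(d)=2\int_0^d(d-z)K_t(z)\,dz$ and using $\|K_t\|_1=2\int_0^\infty K_t$ leaves $2\int_0^\infty K_t(\tau)\min(\tau,d)\,d\tau$ from the first term. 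Adding the second term and using $\min(\tau,d)+\min((\tau-d)_+,2m)=\min(\tau,d+2m)$, I expect the identity
\[
 \tfrac12[\Phi_t(p,q)+\Phi_t(q,p)]=2G_t(p-q)+2\int_0^\infty K_t(\tau)\min(\tau,p+q)\,d\tau .
\]
Integrated over $x$ and $t$, the first term gives exactly $\Q_s(u)$.

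For the second term, concavity of $r\mapsto\min(\tau,r)$ gives $\min(\tau,p+q)\ge\tfrac12[\min(\tau,2p)+\min(\tau,2q)]$. Using the symmetry in $(x,t)$ once more, the remaining contribution is therefore at least $\int_I\psi(u(x))\,dx$, where
\[
 \psi(p)=2\int_\R\int_0^\infty K_t(\tau)\min(\tau,2p)\,d\tau\,dt .
\]
This $\psi(p)$ is the per-period perimeter of the strip $S_p$ (the case $p=q$ of the identity, where $G_t(0)=0$). By scaling $t=\tau\sigma$, $\int_\R K_t(\tau)\,dt=c_s\tau^{-1-s}$. Then
\[
 \psi(p)=2c_s\int_0^\infty\tau^{-1-s}\min(\tau,2p)\,d\tau=2c_s(2p)^{1-s}\Bigl(\tfrac1{1-s}+\tfrac1s\Bigr)=b_s\,p^{1-s},
\]
matching \eqref{e:constants}, and $\psi(0)=0$ handles empty sections.

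The main obstacle is bookkeeping rather than analysis. Some individual pieces, such as $d\|K_t\|_1$ and $\int_{\R}\int_I G_t\,dx\,dt$, may be infinite, so the identity above must be stated pointwise in $(x,t)$ with all terms nonnegative, and only then integrated with Tonelli. This is why the result is a lower bound valid even when $\Pp_s(E_u)=\infty$. I would also double-check the factor conventions for $\Q_s$, namely the $2$ in front of $G_t$ together with the symmetrization over ordered pairs, against the case of two constant profiles, and the overlap formula in the edge cases $m=0$ and $|\tau|\ge M+m$.
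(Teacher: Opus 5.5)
Your proposal is correct and follows essentially the paper's argument. Both proofs use the same pointwise identity $2G_t(p-q)+2\int_0^\infty K_t(\tau)\min(\tau,p+q)\,d\tau$ for the interval interaction, then midpoint concavity of $r\mapsto\min(\tau,r)$, then the scaling computation giving $b_s p^{1-s}$. The differences are cosmetic: you derive the identity from an overlap-length formula rather than the paper's double integration of $G_t''=K_t$, and you remove the odd term $c_s|t|^{-1-s}(p-q)$ by symmetrizing under $(x,t)\mapsto(x-t,-t)$ so everything stays nonnegative, whereas the paper cancels that term at fixed $t$ using $\int_I(u(x)-u(x-t))\,dx=0$.
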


\begin{proof}
For $t\ne0$, define
\[
 H_t(r)=\int_0^\infty\min\{r,z\}K_t(z)\,dz,
 \qquad r\ge0.
\]
This function is nonnegative and concave. Direct integration gives
\[
 \int_\R K_t(z)\,dz=c_s|t|^{-1-s},\qquad
 G_t(r)+H_t(r)=\frac{c_s}{2}|t|^{-1-s}r\quad(r\ge0).
\]
Since $G_t$ is even and $G_t''=K_t$, integration over two intervals gives, for $p,q\ge0$,
\[
 \int_{-p}^p\int_{-q}^qK_t(z-w)\,dw\,dz
 =2G_t(p+q)-2G_t(p-q).
\]
Consequently,
\[
 \int_{-p}^p\int_{\R\setminus(-q,q)}K_t(z-w)\,dw\,dz
 =2G_t(p-q)+2H_t(p+q)+c_s|t|^{-1-s}(p-q).
\]
For fixed $t\ne0$, both $G_t$ and $H_t$ have at most linear growth. We may therefore set $p=u(x)$ and $q=u(x-t)$ and integrate in $x\in I$. Periodicity cancels the last term because
\[
 \int_I(u(x)-u(x-t))\,dx=0.
\]
This cancellation is made before integrating in $t$, since $|t|^{-1-s}$ is not integrable near zero. Tonelli's theorem then gives
\begin{equation}\label{e:representation}
 \Pp_s(E_u)=\Q_s(u)+2\int_\R\int_I
 H_t(u(x)+u(x-t))\,dx\,dt.
\end{equation}

For $z>0$ and $r\ge0$, direct integration, with $t=z\tau$ in the first identity, gives
\[
 \int_\R K_t(z)\,dt
 =c_sz^{-1-s},\quad
 \int_0^\infty\min\{r,z\}z^{-1-s}\,dz
 =\frac{r^{1-s}}{s(1-s)}.
\]
Thus
\begin{equation}\label{e:H-integral}
 2\int_\R H_t(r)\,dt=b_s(r/2)^{1-s},\qquad r\ge0.
\end{equation}
In particular, \eqref{e:representation} with $u\equiv a$ gives $\Pp_s(S_a)=b_sa^{1-s}$. If $a=\int_Iu$, Jensen's inequality gives $\int_IH_t(u(x)+u(x-t))\,dx\le H_t(2a)$. Thus the second term in \eqref{e:representation} is at most $b_sa^{1-s}<\infty$, and finite perimeter is equivalent to $\Q_s(u)<\infty$. Finally, concavity gives
\[
 H_t(p+q)\ge\tfrac12H_t(2p)+\tfrac12H_t(2q).
\]
After integration in $x$, the two terms on the right agree by periodicity. Using \eqref{e:H-integral}, we obtain the bound needed in \eqref{e:representation}:
\[
 2\int_\R\int_I H_t(u(x)+u(x-t))\,dx\,dt
 \ge 2\int_I\int_\R H_t(2u(x))\,dt\,dx
 =b_s\int_Iu^{1-s}\,dx.\qedhere
\]
\end{proof}

\subsection{A direct bound for the concavity deficit}\label{ss:estimates}
For a nonnegative periodic $u\in L^1(I)$ of mean $a$, define the concavity deficit
\begin{equation}\label{e:deficits}
 D_s(u)=a^{1-s}-\int_Iu^{1-s}\,dx\ge0.
\end{equation}
Lemma~\ref{l:representation} gives
\begin{equation}\label{e:cost-loss}
 \Pp_s(E_u)-\Pp_s(S_a)\ge\Q_s(u)-b_sD_s(u).
\end{equation}
The next lemma bounds this deficit by pairwise profile differences, with only linear growth for large differences.

\begin{lemma}\label{l:direct-deficit}
For $a>0$ and $r\ge0$, define
\begin{equation}\label{e:B}
 B_{s,a}(r)=
 \begin{cases}
 sa^{-1-s}r^2,&0\le r\le a,\\
 (1+s)a^{-s}r-r^{1-s},&r\ge a.
 \end{cases}
\end{equation}
Every nonnegative periodic $u\in L^1(I)$ of mean $a>0$ satisfies
\begin{equation}\label{e:direct-deficit}
 2D_s(u)\le\int_I\int_I B_{s,a}(|u(x)-u(y)|)\,dy\,dx.
\end{equation}
\end{lemma}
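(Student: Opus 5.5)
The plan is to write the deficit as an integrated Bregman (tangent-line) error at the mean, and then pass from deviations from the mean to pairwise differences using convexity of $B_{s,a}$. Set $\varphi(v)=v^{1-s}$ and
\[
 e(v)=\varphi(a)+\varphi'(a)(v-a)-\varphi(v)\ge0 .
\]
Since $\int_I(u-a)\,dx=0$, we get $D_s(u)=\int_I e(u(x))\,dx$. This also handles profiles that vanish on sets of positive measure, because $e(0)=sa^{1-s}$ is finite, and unbounded profiles, because $e$ grows at most linearly.

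First I check the structure of $B_{s,a}$. Both pieces agree at $r=a$ with value $sa^{1-s}$ and slope $2sa^{-s}$. On $r\ge a$ the second derivative is $s(1-s)r^{-1-s}>0$. Hence $B_{s,a}$ is $C^1$, convex and increasing on $[0,\infty)$, with $B_{s,a}(0)=0$. Consequently $h\mapsto B_{s,a}(|h|)$ is convex on $\R$, and $B_{s,a}$ is superadditive: $B_{s,a}(r_1+r_2)\ge B_{s,a}(r_1)+B_{s,a}(r_2)$.

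Next I use $u(x)-a=\int_I(u(x)-u(y))\,dy$ together with Jensen's inequality:
\[
 B_{s,a}(|u(x)-a|)\le\int_I B_{s,a}(|u(x)-u(y)|)\,dy .
\]
Integrating in $x$ shows it suffices to prove $2\int_I e(u)\le\int_I B_{s,a}(|u-a|)$, possibly after some rebalancing between the regions $\{u<a\}$ and $\{u>a\}$.

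The pointwise comparison splits by the sign of the deviation. For $v=a+h$ with $h\ge0$, we have $e(a+h)=\int_0^h(h-\tau)s(1-s)(a+\tau)^{-1-s}\,d\tau$. This gives $e(a+h)\le\tfrac12 s(1-s)a^{-1-s}h^2$, which is comfortably below $\tfrac12 B_{s,a}(h)$ for $h\le a$. For $h\ge a$, I would compare derivatives:
\[
 e'(a+h)=\varphi'(a)-\varphi'(a+h),\qquad \tfrac12B_{s,a}'(h)=\tfrac12\bigl((1+s)a^{-s}-(1-s)h^{-s}\bigr),
\]
starting from the already-verified value at $h=a$. For $h=-r<0$ only $r\in[0,a]$ occurs. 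The same integral formula bounds $e(a-r)$ by $sa^{-1-s}r^2=B_{s,a}(r)$, with equality at $r=a$, where $e(0)=sa^{1-s}$.

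The main obstacle is this negative-deviation region. There the pointwise bound is $e\le B_{s,a}$ rather than $e\le\tfrac12B_{s,a}$, so Jensen alone loses a factor of $2$ exactly at empty sections; the large positive deviations may lose a constant as well. To recover the factor, I would not apply Jensen crudely on $\{u<a\}$. Instead I would exploit mass balance: $\int_I(u-a)_+=\int_I(u-a)_-$. For $x\in\{u<a\}$ and $y\in\{u>a\}$, superadditivity gives
\[
 B_{s,a}(u(y)-u(x))\ge B_{s,a}(a-u(x))+B_{s,a}(u(y)-a),
\]
and both orderings $(x,y)$ and $(y,x)$ contribute to the double integral. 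I would then split $\int_I\int_I B_{s,a}(|u(x)-u(y)|)\,dy\,dx$ into same-sign pairs, handled by Jensen on each sign region, and cross pairs, handled by superadditivity. The cross pairs should supply the missing half for the deficient region: heuristically, a low section must be compensated by high sections of equal total excess mass, and these pay the extra cost. Making this rebalancing precise, including the weights $|\{u<a\}|$ and $|\{u>a\}|$ that appear, is the step I expect to require the most care. If it fails in general, I would fall back on using the tangent line at $u(y)$ instead of at $a$ for the region $\{u<a\}$ and symmetrizing in $(x,y)$.
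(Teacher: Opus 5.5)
There is a genuine gap. The route through deviations from the mean cannot reach the constant in \eqref{e:direct-deficit}, and the rebalancing you propose provably fails. Take $u=0$ on a set of measure $\theta$ and $u=a/(1-\theta)$ on its complement. Then
\[
 2D_s(u)=2a^{1-s}\bigl(1-(1-\theta)^s\bigr)=a^{1-s}\bigl(2s\theta+s(1-s)\theta^2\bigr)+O(\theta^3).
\]
The exact right-hand side is $2\theta(1-\theta)B_{s,a}\bigl(a/(1-\theta)\bigr)=a^{1-s}(2s\theta+2s\theta^2)+O(\theta^3)$. So the lemma is sharp to first order here, with slack only $s(1+s)\theta^2a^{1-s}$.

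\begin{itemize}
\item Your intermediate target $2\int_Ie(u)\le\int_IB_{s,a}(|u-a|)$ is false, as you suspected. Its right side is $sa^{1-s}\theta/(1-\theta)$, about half of what is needed. At the high sections Jensen replaces the rare large difference, worth $\theta B_{s,a}(a/(1-\theta))$, by $B_{s,a}(a\theta/(1-\theta))=O(\theta^2)$.
\item The cross-pair split does not repair this. Same-sign pairs contribute exactly $0$, and superadditivity bounds the cross pairs below by
\[
 2\theta(1-\theta)\bigl(B_{s,a}(a)+B_{s,a}(a\theta/(1-\theta))\bigr)=2sa^{1-s}(\theta-\theta^2)+O(\theta^3).
\]
This falls short of $2D_s(u)$ by $s(3-s)\theta^2a^{1-s}+O(\theta^3)$ for every $s$. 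For instance, $s=1/2$, $\theta=1/10$ gives $0.0911\,a^{1/2}<0.1026\,a^{1/2}$. Superadditivity discards the linear increment of $B_{s,a}$ beyond $r=a$ (slope $2sa^{-s}$). In total that costs about $4s\theta^2a^{1-s}$, more than the available slack.
\item Your derivative comparison for large positive deviations also fails when $s<1/3$: there $e(a+h)\sim(1-s)a^{-s}h$ while $\tfrac12B_{s,a}(h)\sim\tfrac12(1+s)a^{-s}h$.
\item The fallback with tangent lines at $u(y)$ is too unspecified to assess.
\end{itemize}

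The missing idea is to build the pairwise structure in from the start. Compare $r^{1-s}$ with a quadratic, rather than linear, tangent at $a$. The function $g(r)=r^{1-s}-(1+s)a^{-s}r+sa^{-1-s}r^2=r\bigl(r^{-s}-a^{-s}+sa^{-1-s}(r-a)\bigr)$ is nonnegative by convexity of $r^{-s}$. It is nondecreasing on $[a,\infty)$, since $g'(a)=0$ and $g''>0$ there. Write $p^2+q^2=(p-q)^2+2pq$. Use $g(p)+g(q)\ge0$ when $|p-q|\le a$, and $g(p)+g(q)\ge g(|p-q|)$ when $|p-q|\ge a$ (the larger of $p,q$ is at least $|p-q|$). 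This gives, for all $p,q\ge0$,
\[
 p^{1-s}+q^{1-s}\ge(1+s)a^{-s}(p+q)-2sa^{-1-s}pq-B_{s,a}(|p-q|).
\]
Put $p=u(x)$, $q=u(y)$ and integrate over $I^2$. This uses only $\int_Iu=a$ and $\int_I\int_Iu(x)u(y)\,dy\,dx=a^2$, and yields \eqref{e:direct-deficit} directly, with every term integrable for $u\in L^1$. This is the paper's argument. The cross term $pq$ integrates exactly, so $|u-a|$ never appears and nothing is lost to Jensen.
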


\begin{proof}
For $r\ge0$, set
\[
 g(r)=r^{1-s}-(1+s)a^{-s}r+sa^{-1-s}r^2.
\]
Convexity of $r\mapsto r^{-s}$ on $(0,\infty)$ gives $g(r)\ge0$ for $r>0$, and $g(0)=0$. Moreover, $g'(a)=0$ and
\[
 g''(r)=2sa^{-1-s}-s(1-s)r^{-1-s}>0\qquad(r\ge a),
\]
so $g$ is nondecreasing on $[a,\infty)$.

Let $p,q\ge0$ and $r=|p-q|$. If $r\le a$, nonnegativity of $g$ gives
$g(p)+g(q)\ge sa^{-1-s}r^2-B_{s,a}(r)=0$.
If $r\ge a$, the larger of $p,q$ is at least $r$, and hence
\[
 g(p)+g(q)\ge g(r)=sa^{-1-s}r^2-B_{s,a}(r).
\]
Expanding $g(p)+g(q)$ therefore yields, in both cases,
\begin{equation}\label{e:two-point-deficit}
 p^{1-s}+q^{1-s}
 \ge(1+s)a^{-s}(p+q)-2sa^{-1-s}pq-B_{s,a}(|p-q|).
\end{equation}
By substituting $p=u(x),q=u(y)$ and integrating this over $I^2$, we have
\[
 2\int_Iu^{1-s}\,dx
 \ge2a^{1-s}-\int_I\int_I B_{s,a}(|u(x)-u(y)|)\,dy\,dx.\qedhere
\]
\end{proof}

\subsection{A dyadic estimate at small horizontal separations}\label{ss:dyadic}
We now estimate the part of $\Q_s(u)$ arising from horizontal separations shorter than one period. Set
\[
 \Psi(r)=\frac{r^2}{1+|r|},
\]
an even convex function that is nondecreasing in $|r|$ and has at most linear growth.
Telescoping and convexity transfer profile differences to successively smaller scales; compare \cite[Lemma 3.1]{Ponce}. Summing over these scales gives the following estimate.

\begin{lemma}\label{l:dyadic}
Every periodic $u\in L^1(I)$ satisfies
\begin{equation}\label{e:dyadic-pair}
 2\int_{0<|t|<1}\int_I G_t(u(x)-u(x-t))\,dx\,dt
 \ge\frac{1}{1-2^{s-1}}
 \int_I\int_I\Psi(u(x)-u(y))\,dy\,dx.
\end{equation}
\end{lemma}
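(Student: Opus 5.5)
The plan is to exploit the exact scaling of $G_t$ together with a perspective-convexity argument, so that profile differences at horizontal separation $2t$ are controlled by those at separation $t$ with a factor $2$. Summing over dyadic scales then produces the geometric series $\sum_k2^{k(s-1)}=(1-2^{s-1})^{-1}$.

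\emph{Scaling and subadditivity.} First I write $G_t(r)=|t|^{-s}g(|r|/|t|)$ with
\[
 g(\rho)=\int_0^\rho(\rho-w)(1+w^2)^{-(2+s)/2}\,dw ,
\]
which follows from the substitution $z=|t|w$. The function $g$ is convex, even when extended by $g(\rho)=g(|\rho|)$, and satisfies $g(0)=0$. Next I set
\[
 A(t)=\int_I|t|\,g\Bigl(\frac{u(x)-u(x-t)}{|t|}\Bigr)\,dx ,
\]
so that $\int_IG_t(u(x)-u(x-t))\,dx=|t|^{-1-s}A(t)$. Writing $u(x)-u(x-2t)=\Delta_1+\Delta_2$, with $\Delta_1=u(x)-u(x-t)$ and $\Delta_2=u(x-t)-u(x-2t)$, convexity gives
\[
 2|t|\,g\Bigl(\frac{\Delta_1+\Delta_2}{2|t|}\Bigr)\le|t|\,g(\Delta_1/|t|)+|t|\,g(\Delta_2/|t|).
\]
Integrating over $I$ and using periodicity to identify the two terms, I obtain $A(2t)\le2A(t)$, and hence $A(t)\ge2^{-k}A(2^kt)$ for every $k\ge0$.

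\emph{Dyadic summation.} On the shell $2^{-k-1}<|t|\le2^{-k}$ I substitute $\tau=2^kt$. The factors are $2^{-k}$ from the subadditivity bound, $2^{k(1+s)}$ from the kernel and $2^{-k}$ from $dt$, so
\[
 \int_{2^{-k-1}<|t|\le2^{-k}}|t|^{-1-s}A(t)\,dt\ \ge\ 2^{k(s-1)}\int_{1/2<|\tau|\le1}|\tau|^{-1-s}A(\tau)\,d\tau .
\]
Summing over $k\ge0$ gives the left side of \eqref{e:dyadic-pair} bounded below by $\frac{2}{1-2^{s-1}}\int_{1/2<|\tau|\le1}|\tau|^{-1-s}A(\tau)\,d\tau$. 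All integrands are nonnegative, so Tonelli's theorem justifies the interchanges, and no finiteness of $\Q_s(u)$ is needed.

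\emph{Base scale.} For $1/2<|\tau|\le1$ I use $|\tau|^{-1-s}\ge1$. Since the perspective $\lambda\mapsto\lambda g(r/\lambda)$ is nonincreasing in $\lambda$, I also have $A(\tau)\ge\int_Ig(u(x)-u(x-\tau))\,dx$. The set $\{1/2<|\tau|\le1\}$ covers each residue class modulo $1$ exactly once, up to a null set. Therefore periodicity converts $\int_{1/2<|\tau|\le1}\int_Ig(u(x)-u(x-\tau))\,dx\,d\tau$ into $\int_I\int_Ig(u(x)-u(y))\,dy\,dx$. It then remains to prove the pointwise inequality $2g(\rho)\ge\Psi(\rho)$.

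\emph{Main obstacle.} The main difficulty is this final constant check, $2g(\rho)\ge\rho^2/(1+|\rho|)$ uniformly in $s\in(0,1)$. For $|\rho|\le1$, monotonicity of the kernel gives $g(\rho)\ge\frac{\rho^2}{2}2^{-(2+s)/2}\ge\rho^2/4$, which is not quite enough. I would therefore refine this using the exact integral, for instance by comparing $(1+w^2)^{-(2+s)/2}$ with $(1+w)^{-3}$, which gives $g(\rho)\ge\frac{\rho^2}{2(1+\rho)}$. For large $\rho$, $g$ grows linearly with slope near $c_s/2\ge1$. If the constant still fails, I would keep some slack from the $k=0$ shell, or lower the base threshold below $1/2$, to absorb the missing factor.
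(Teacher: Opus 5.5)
Your proposal is correct and follows essentially the paper's route: the scaling $G_t(r)=|t|^{-s}G_1(r/|t|)$, convexity telescoping to dyadic scales (your doubling bound $A(2t)\le 2A(t)$ is the iterated form of the paper's one-step $2^k$-term Jensen inequality on difference quotients), geometric summation, periodicity at the base scale, and a pointwise bound $2G_1\ge\Psi$. The step you flag as the main obstacle is already closed by your own comparison: $1+w^2\le(1+w)^2$ and $2+s<3$ give $(1+w^2)^{-(2+s)/2}\ge(1+w)^{-3}$ for all $w\ge0$, hence $2g(\rho)\ge\rho^2/(1+|\rho|)=\Psi(\rho)$ for every $\rho$, so no fallback is needed (the paper compares with $(1+w^2)^{-3/2}$ instead and gets $2(\sqrt{1+\rho^2}-1)\ge\Psi(\rho)$).
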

\begin{proof}
Since $s<1$, direct integration gives
\[
 2G_1(r)
 \ge2\int_0^{|r|}(|r|-z)(1+z^2)^{-3/2}\,dz
 =2\bigl(\sqrt{1+r^2}-1\bigr)
 \ge\Psi(r).
\]
The change of variables $z=|t|w$ gives $G_t(r)=|t|^{-s}G_1(r/|t|)$, and hence
\begin{equation}\label{e:scaled-cost}
 2G_t(r)\ge|t|^{-s}\Psi(r/|t|),\qquad 0<|t|\le1.
\end{equation}

Fix an integer $k\ge0$. For $\rho\in(1/2,1)$, let $n=2^k$ and $t=\rho/n$. Telescoping gives
\[
 \frac{u(x)-u(x-\rho)}{\rho}
 =\frac1n\sum_{j=0}^{n-1}
 \frac{u(x-jt)-u(x-(j+1)t)}{t}.
\]
Convexity, periodicity, and monotonicity of $\Psi$ in the absolute value of its argument imply
\begin{align*}
 \int_I\Psi(u(x)-u(x-\rho))\,dx
 &\le\int_I\Psi\!\left(\frac{u(x)-u(x-\rho)}{\rho}\right)\,dx\\
 &\le\int_I\Psi\!\left(\frac{u(x)-u(x-t)}{t}\right)\,dx.
\end{align*}
Observe that the function $A(\rho)=\int_I\Psi(u(x)-u(x-\rho))\,dx$ is periodic and satisfies $A(-\rho)=A(\rho)$. Hence $A(1-\rho)=A(\rho)$, giving
\[
 2\int_{1/2}^1\int_I\Psi(u(x)-u(x-\rho))\,dx\,d\rho
 =\int_I\int_I\Psi(u(x)-u(y))\,dy\,dx.
\]
On the interval $(2^{-k-1},2^{-k})$, use $t=2^{-k}\rho$, \eqref{e:scaled-cost}, and $\rho^{-s}\ge1$. We obtain
\begin{align*}
 &4\int_{2^{-k-1}}^{2^{-k}}\int_I G_t(u(x)-u(x-t))\,dx\,dt\\
 &\qquad\ge2^{1-k(1-s)}\int_{1/2}^1\int_I
 \Psi(u(x)-u(x-\rho))\,dx\,d\rho\\
 &\qquad=2^{-k(1-s)}\int_I\int_I
 \Psi(u(x)-u(y))\,dy\,dx.
\end{align*}
The positive and negative $t$ contributions agree by translation in $x$ and evenness of $G_t$ in both $t$ and its argument. The nonnegative shell sum is valid even if infinite, and $\sum_{k\ge0}2^{-k(1-s)}=(1-2^{s-1})^{-1}$ proves the claim.
\end{proof}

\subsection{A lower bound from large horizontal separations}\label{ss:long}
To estimate the contribution from larger separations, we sum over translates of the period cell. The resulting kernel bound leads to the function
\begin{equation}\label{e:F}
 F_s(r)=\int_0^r(r-z)\left(z+\frac{3\pi}{4}\right)^{-1-s}\,dz,\qquad r\ge0.
\end{equation}
This function is convex and Lipschitz, with $0\le F_s'(r)\le(3\pi/4)^{-s}/s$.

\begin{lemma}\label{l:long}
Every periodic $u\in L^1(I)$ satisfies
\begin{equation}\label{e:long-bound}
 2\int_{|t|\ge1}\int_I G_t(u(x)-u(x-t))\,dx\,dt
 \ge2c_s\int_I\int_I F_s(|u(x)-u(y)|)\,dy\,dx.
\end{equation}
\end{lemma}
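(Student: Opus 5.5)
The plan is to reduce \eqref{e:long-bound} to a pointwise lower bound for a periodized kernel, and then to a one-variable inequality.

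\emph{Step 1: fold the $t$-integral onto the period cell.} Write $t=k+\rho$ with $k\in\mathbb Z$ and $\rho\in(0,1)$, and use periodicity of $u$ in $x$. Substituting $y=x-t$ and applying Tonelli to the nonnegative integrand, I expect
\[
 \int_{|t|\ge1}\int_I G_t(u(x)-u(x-t))\,dx\,dt
 =\int_I\int_I\sum_{k\in\mathbb Z,\ |x-y+k|\ge1}G_{x-y+k}(u(x)-u(y))\,dy\,dx .
\]
Since $G_t(r)=\int_0^{|r|}(|r|-z)K_t(z)\,dz$ is linear in the kernel with a nonnegative weight, it then suffices to prove the following kernel bound. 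For every $\tau\in\R$ and every $z\ge0$,
\[
 \sum_{k:\,|\tau+k|\ge1}K_{\tau+k}(z)\ \ge\ c_s\Bigl(z+\tfrac{3\pi}{4}\Bigr)^{-1-s}.
\]
Multiplying by $(|r|-z)$ and integrating over $z\in(0,|r|)$ turns this into $\sum_k G_{\tau+k}(r)\ge c_sF_s(|r|)$. With the factor $2$ on both sides, this gives \eqref{e:long-bound}.

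\emph{Step 2: sum-to-integral comparison.} For fixed $z$, the map $t\mapsto K_t(z)$ is even and decreasing in $|t|$. The lattice points $\tau+k$ lying in $[1,\infty)$ have spacing one, and the first of them lies in $[1,2)$. Hence their sum is at least $\int_{2}^\infty K_t(z)\,dt$, by comparing each term with the integral over the following unit interval. The same holds on the negative side. Thus the periodized sum is at least $2\int_2^\infty K_t(z)\,dt$. If needed, convexity of $t\mapsto K_t(z)$ for $|t|$ beyond the inflection point can improve the threshold $2$ toward $3/2$; I will keep this in reserve in case the constant is tight.

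\emph{Step 3: the one-variable inequality, which is the main obstacle.} The substitution $t=z\tau$ gives
\[
 \int_h^\infty K_t(z)\,dt=z^{-1-s}\varphi(h/z),\qquad \varphi(x)=\int_x^\infty(1+\tau^2)^{-(2+s)/2}\,d\tau,\qquad \varphi(0)=\tfrac{c_s}{2}.
\]
Setting $x=h/z$ and $\beta=3\pi/(4h)$, we have $z/(z+3\pi/4)=(1+\beta x)^{-1}$. So the required bound becomes
\[
 \varphi(x)\ge\varphi(0)(1+\beta x)^{-1-s}\qquad(x\ge0).
\]
Both sides agree at $x=0$, so it suffices to compare logarithmic derivatives, that is, to show
\[
 \frac{(1+x^2)^{-(2+s)/2}}{\varphi(x)}\le\frac{(1+s)\beta}{1+\beta x}.
\]

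I would verify this as follows. At $x=0$ the requirement is $2/c_s\le(1+s)\beta$, which holds for $h=2$ because $c_s\ge2$ and $\beta=3\pi/8>1$. For large $x$, the bound $\varphi(x)\ge\int_x^\infty(\tau+1)^{-2-s}\,d\tau$ gives $\varphi(x)\ge(1+x)^{-1-s}/(1+s)$. This reduces the condition to an elementary comparison between $(1+x)^{1+s}(1+x^2)^{-(2+s)/2}(1+\beta x)$ and $\beta$, which I would check separately for $x\le1$ and $x\ge1$.

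The precise constant $3\pi/4$ suggests that the author's route is slightly sharper than mine, perhaps using $h=3/2$ or an exact evaluation of $\int_{|t|<h}K_t$. This is where I expect the bookkeeping to be delicate. If the crude $h=2$ bound fails near some intermediate $x$, I would fall back on the convexity refinement from Step 2.
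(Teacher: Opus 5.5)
Your Step 1 is the same folding the paper uses. Step 2, however, throws away structure that the constant $3\pi/4$ genuinely needs, so the problem is not bookkeeping. Bounding the positive and negative half-lattices separately by their worst case gives, at $z=0$, the value $2\int_2^\infty t^{-2-s}\,dt=2^{-s}/(1+s)$. This tends to $1$ as $s\to0$, while $c_s(3\pi/4)^{-1-s}\to4/3$. As $r\to0$ we have $G_t(r)\sim\tfrac{r^2}{2}K_t(0)$ and $F_s(r)\sim\tfrac{r^2}{2}(3\pi/4)^{-1-s}$, so the integrated bound you would obtain also falls below $c_sF_s(r)$ for small $r$ and small $s$.

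No one-sided refinement can repair this, including your fallback via convexity of $t\mapsto K_t(z)$. Any bound that is uniform in the position of the first lattice point in $[1,2)$ is at most $\sum_{k\ge2}K_k(z)$. At $z=0$, $2\sum_{k\ge2}k^{-2-s}\to\pi^2/3-2\approx1.29<4/3$.

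The missing idea is to couple the two sides. For fractional part $\rho\in(0,1)$, the first admissible points are $1+\rho$ and $2-\rho$, whose average is $3/2$. Monotonicity bounds the sum below by $\phi(1+\rho)+\phi(2-\rho)$, where $\phi(b)=\int_b^\infty K_t(z)\,dt$. The function $\phi$ is convex because $\phi'(b)=-K_b(z)$ is nondecreasing, so the sum is at least $2\phi(3/2)$. At $z=0$ and $s\to0$ this equals exactly $4/3$, which is why the constant is $\tfrac\pi2\cdot\tfrac32$.

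Step 3 also fails as planned, even with the correct value $\beta=\pi/2$. Comparing logarithmic derivatives from $x=0$ amounts to requiring that $R(x)=\varphi(x)(1+\beta x)^{1+s}$ be nondecreasing. But $\varphi(x)=\frac{x^{-1-s}}{1+s}\bigl(1+O(x^{-2})\bigr)$ gives $R(x)=\frac{\beta^{1+s}}{1+s}\bigl(1+\frac{1+s}{\beta x}+O(x^{-2})\bigr)$. This is eventually decreasing for every $s$ and $\beta$, so your displayed criterion is false for large $x$.

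The cruder reduction through $\varphi(x)\ge(1+x)^{-1-s}/(1+s)$ is worse still. The quantity $(1+x)^{1+s}(1+x^2)^{-(2+s)/2}(1+\beta x)$ tends to $\beta$ from above, so it cannot be bounded by $\beta$.

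The target inequality is true for $\beta=\pi/2$, but it is asymptotically sharp as $s\to0$ and $x\to\infty$, so it needs a non-monotone argument. The paper proceeds as follows:
\begin{itemize}
\item It substitutes $t=z\cot v$, which turns $2\int_b^\infty K_t(z)\,dt/(c_sz^{-1-s})$ into $\int_0^\theta\sin^sv\,dv/\int_0^{\pi/2}\sin^sv\,dv$ with $\theta=\arctan(z/b)$.
\item Concavity of sine gives $\sin(\theta v)\ge(2\theta/\pi)\sin(\pi v/2)$ for $0\le v\le1$, so this ratio is at least $(2\theta/\pi)^{1+s}$.
\item Finally, $\sin\theta\le\theta$ and $\cos\theta\ge1-2\theta/\pi$ give $2\theta/\pi\ge z/(z+\tfrac\pi2b)$.
\end{itemize}
Together these yield the tail bound $2\int_b^\infty K_t(z)\,dt\ge c_s(z+\tfrac\pi2b)^{-1-s}$, which the paper applies with $b=3/2$.
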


\begin{proof}
We first prove the tail bound
\begin{equation}\label{e:tail-bound}
 2\int_b^\infty K_t(z)\,dt\ge c_s\bigl(z+\tfrac\pi2 b\bigr)^{-1-s}
 \qquad(b>0,\ z\ge0).
\end{equation}
For $z>0$, put $\theta=\arctan(z/b)\in(0,\pi/2)$. The substitution $t=z\cot v$ gives
\[
 \frac{2\int_b^\infty K_t(z)\,dt}{c_sz^{-1-s}}
 =\frac{\int_0^\theta\sin^s v\,dv}{\int_0^{\pi/2}\sin^s v\,dv}.
\]
Concavity of sine gives $\sin(\theta v)\ge(2\theta/\pi)\sin(\pi v/2)$ for $0\le v\le1$. Hence
\[
 \int_0^\theta\sin^s v\,dv
 =\theta\int_0^1\sin^s(\theta v)\,dv
 \ge\left(\frac{2\theta}{\pi}\right)^{1+s}\int_0^{\pi/2}\sin^s v\,dv.
\]
Since $\sin\theta\le\theta$ and $\cos\theta\ge1-2\theta/\pi$,
\[
 \frac{2\theta}{\pi}
 \ge\frac{1}{1+(\pi/2)\cot\theta}
 =\frac{z}{z+(\pi/2)b}.
\]
Combining these inequalities proves \eqref{e:tail-bound} for $z>0$; the case $z=0$ follows by continuity.

For $0<\rho<1$, the numbers $|j+\rho|\ge1$, $j\in\mathbb Z$, form two sequences starting at $1+\rho$ and $2-\rho$. Since $K_t(z)$ decreases with $t>0$, each sum bounds the corresponding tail integral from below. The function $b\mapsto\int_b^\infty K_t(z)\,dt$ is convex. Consequently,
\begin{align}
 \sum_{|j+\rho|\ge1}K_{j+\rho}(z)
 &\ge\int_{1+\rho}^\infty K_t(z)\,dt
      +\int_{2-\rho}^\infty K_t(z)\,dt\notag\\
 &\ge2\int_{3/2}^\infty K_t(z)\,dt
 \ge c_s\left(z+\frac{3\pi}{4}\right)^{-1-s}.
 \label{e:lattice-kernel}
\end{align}
When $\rho=0$ or $1$, both sequences start at $1$, giving a stronger tail bound. For $r\ge0$, multiplying \eqref{e:lattice-kernel} by $r-z$ and integrating over $0<z<r$ gives
\[
 \sum_{|j+\rho|\ge1}G_{j+\rho}(r)\ge c_sF_s(r).
\]
Finally, for fixed $x\in I$, write $t=x-y+j$ with $y\in I$ and $j\in\mathbb Z$. These substitutions cover the disjoint cells $(x+j-1,x+j)$ up to endpoints, with absolute Jacobian one. Periodicity gives $u(x-t)=u(y)$, so Tonelli yields
\[
 2\int_{|t|\ge1}\int_I G_t(u(x)-u(x-t))\,dx\,dt
 =2\int_I\int_I
 \sum_{|x-y+j|\ge1}G_{x-y+j}(u(x)-u(y))\,dy\,dx.
\]
Reindex $j$ to apply the preceding bound to the fractional part of $x-y$. Evenness in the profile difference then proves \eqref{e:long-bound}, with the exterior factor $2$ giving exactly $2c_s$.
\end{proof}

\section{Proof of the main theorem}\label{ss:mainproof}
\begin{proof}[Proof of Proposition~\ref{p:profile}]
Let $u\ge0$ have mean $a\ge14$. Recall that
\[
 D_s(u)=a^{1-s}-\int_Iu^{1-s}\,dx,
 \qquad b_s=\frac{2^{2-s}c_s}{s(1-s)}.
\]
By \eqref{e:cost-loss},
\[
 \Pp_s(E_u)-\Pp_s(S_a)\ge\Q_s(u)-b_sD_s(u).
\]
Thus we must show that the interaction $\Q_s(u)$ controls the concavity deficit $b_sD_s(u)$ and leaves the stated remainder. Since $\Psi(r)=r^2/(1+|r|)\le|r|$ and $\int_I|u-a|\le2a$, that remainder is finite. The case $\Pp_s(E_u)=+\infty$ is therefore immediate; henceforth we assume that the perimeter is finite.
Then $\Q_s(u)<\infty$ by Lemma~\ref{l:representation}.

Write
\[
 J=\int_I\int_I\Psi(u(x)-u(y))\,dy\,dx.
\]
Splitting the integral defining $\Q_s(u)$ into the regions $0<|t|<1$ and $|t|\ge1$ and applying Lemmas~\ref{l:dyadic} and~\ref{l:long}, we obtain
\[
 \Q_s(u)\ge\frac{J}{1-2^{s-1}}
 +2c_s\int_I\int_I F_s(|u(x)-u(y)|)\,dy\,dx.
\]
We will reserve $J/[8(1-s)]$ from the first term and use the remaining terms to control the deficit. The argument below first justifies this division of the coefficient and then proves the required pointwise comparison.

\smallskip
\noindent\emph{Reserving the remainder.}
We claim that
\begin{equation}\label{e:reserve}
 \frac{1}{1-2^{s-1}}\ge\frac1{8(1-s)}+\frac{9c_s}{16(1-s)}.
\end{equation}
To estimate $c_s$, write its integrand as
\[
 (1+t^2)^{-(2+s)/2}
 =\bigl((1+t^2)^{-1}\bigr)^{1-s}
  \bigl((1+t^2)^{-3/2}\bigr)^s.
\]
The two factors before taking these powers have integrals $\pi$ and $2$, respectively. H\"older's inequality, followed by the weighted arithmetic--geometric mean inequality and $\pi<22/7$, therefore gives
\[
 c_s\le\pi^{1-s}2^s\le(1-s)\pi+2s
 <2+\frac87(1-s).
\]
For the coefficient on the left of \eqref{e:reserve}, consider
$h(x)=x(e^x+1)-2(e^x-1)$. Since $h(0)=h'(0)=0$ and $h''(x)=xe^x\ge0$ for $x\ge0$, we have $h(x)\ge0$. Rearranging this inequality yields
\[
 \frac{x}{1-e^{-x}}\ge1+\frac{x}{2}\qquad(x>0).
\]
Taking $x=(1-s)\log2$ and using $\log2<7/10$, we obtain
\[
 \frac{1-s}{1-2^{s-1}}
 \ge\frac1{\log2}+\frac{1-s}{2}
 >\frac{10}{7}+\frac{1-s}{2}.
\]
On the other hand, the bound for $c_s$ gives
\[
 \frac18+\frac{9c_s}{16}<\frac54+\frac9{14}(1-s).
\]
The difference between these last two comparison bounds is
\[
 \left(\frac{10}{7}+\frac{1-s}{2}\right)
 -\left(\frac54+\frac9{14}(1-s)\right)
 =\frac5{28}-\frac{1-s}{7}\ge\frac1{28}>0.
\]
Division by $1-s>0$ proves \eqref{e:reserve}.

\smallskip
\noindent\emph{A pointwise comparison for the deficit.}
Recall the functions appearing in Lemmas~\ref{l:direct-deficit} and~\ref{l:long}:
\[
 B_{s,a}(r)=
 \begin{cases}
 sa^{-1-s}r^2,&0\le r\le a,\\
 (1+s)a^{-s}r-r^{1-s},&r\ge a,
 \end{cases}
\]
\[
 F_s(r)=\int_0^r(r-z)(z+\kappa)^{-1-s}\,dz,
\]
where we set $\kappa=3\pi/4$ for the remainder of the proof. We claim that
\begin{equation}\label{e:combined-cost}
 s(1-s)F_s(r)+\frac{9s}{32}\Psi(r)
 \ge2^{-s}B_{s,a}(r)\qquad(r\ge0,\ a\ge14).
\end{equation}
The normalization is chosen so that multiplication by $2c_s/[s(1-s)]$ gives
\[
 2c_sF_s(r)+\frac{9c_s}{16(1-s)}\Psi(r)
 \ge\frac{b_s}{2}B_{s,a}(r).
\]
Indeed, the coefficient on the right is $2^{1-s}c_s/[s(1-s)]=b_s/2$. After integration, Lemma~\ref{l:direct-deficit}, which states that $2D_s(u)\le\int_I\int_I B_{s,a}(|u(x)-u(y)|)\,dy\,dx$, will give exactly the required control of $b_sD_s(u)$.

To prove \eqref{e:combined-cost}, first note that
\begin{equation}\label{e:Fbounds}
\begin{aligned}
 s(1-s)F_s(r)
 &=(1-s)\kappa^{-s}r-(\kappa+r)^{1-s}+\kappa^{1-s}\\
 &\ge(1-s)\kappa^{-s}r-r^{1-s}.
\end{aligned}
\end{equation}
We compare the two sides of \eqref{e:combined-cost} separately on the two ranges in the definition of $B_{s,a}$. Denote their difference by
\[
 \Delta(r)=s(1-s)F_s(r)+\frac{9s}{32}\Psi(r)-2^{-s}B_{s,a}(r).
\]

For $0<r\le a$, the substitution $z=rt$ gives
\[
 \frac{F_s(r)}{r^2}=\int_0^1(1-t)(\kappa+rt)^{-1-s}\,dt.
\]
The integrand is nonincreasing in $r$, so $F_s(r)\ge r^2F_s(a)/a^2$. Also $\Psi(r)=r^2/(1+r)\ge r^2/(1+a)$. Using the quadratic branch $B_{s,a}(r)=sa^{-1-s}r^2$, we obtain
\[
 \Delta(r)\ge r^2\left(
 \frac{s(1-s)}{a^2}F_s(a)+\frac{9s}{32(1+a)}
 -s2^{-s}a^{-1-s}\right).
\]
Multiplication by $a/r^2$ and \eqref{e:Fbounds} at $r=a$ now yield
\begin{align*}
 \frac{a}{r^2}\Delta(r)
 &\ge\frac{s(1-s)}aF_s(a)+\frac{9s}{32}\frac{a}{1+a}-s2^{-s}a^{-s}\\
 &\ge(1-s)\kappa^{-s}-(1+s2^{-s})a^{-s}
 +\frac{9s}{32}\frac{a}{1+a}.
\end{align*}
Let this last expression be
\begin{equation}\label{e:scalar-gap}
 \Gamma_{s,a}=(1-s)\kappa^{-s}-(1+s2^{-s})a^{-s}
 +\frac{9s}{32}\frac{a}{1+a}.
\end{equation}

For $r\ge a$, monotonicity of $r/(1+r)$ gives $\Psi(r)\ge ar/(1+a)$. Apply \eqref{e:Fbounds} at $r$ and use the second branch of $B_{s,a}$ to obtain
\begin{align*}
 \frac{\Delta(r)}r
 &\ge (1-s)\kappa^{-s}-r^{-s}+\frac{9s}{32}\frac{a}{1+a}
       -2^{-s}\bigl((1+s)a^{-s}-r^{-s}\bigr)\\
 &= (1-s)\kappa^{-s}-2^{-s}(1+s)a^{-s}
       -(1-2^{-s})r^{-s}+\frac{9s}{32}\frac{a}{1+a}\\
 &\ge\Gamma_{s,a}.
\end{align*}
For the last inequality, $r^{-s}\le a^{-s}$ and $1-2^{-s}>0$ imply
$-(1-2^{-s})r^{-s}\ge-(1-2^{-s})a^{-s}$; the coefficients then combine as
$2^{-s}(1+s)+(1-2^{-s})=1+s2^{-s}$.
Since $\Delta(0)=0$, both ranges are settled once $\Gamma_{s,a}\ge0$. By \eqref{e:scalar-gap}, multiplying $\Gamma_{s,a}\ge0$
by $a^s>0$ shows that this condition is equivalent to
\begin{equation}\label{e:parameter-comparison}
 (1-s)\left(\frac{4a}{3\pi}\right)^s+\frac{9s}{32}\frac{a^{1+s}}{1+a}
 \ge1+s2^{-s}.
\end{equation}

\smallskip
\noindent\emph{Verification for $a\ge14$.}
For fixed $s\in(0,1)$, both terms on the left of \eqref{e:parameter-comparison} increase with $a$. Thus it suffices to set $a=14$. The left side then becomes
\[
 f(s)=(1-s)\left(\frac{56}{3\pi}\right)^s+\frac{21}{80}s\,14^s.
\]
We show the stronger bound $f(s)\ge1+s$ by proving that $f$ is convex and that $f(0)=1$, $f'(0)>1$.

For the derivative calculation, put $L=\log(56/(3\pi))$ and $M=\log14$. The elementary bounds
\[
 \frac74<L<\frac95,\qquad M>\frac52,
 \qquad\log\kappa>\frac12
\]
give
\[
 f(0)=1,\qquad
 f'(0)=L-1+\frac{21}{80}>1.
\]
Moreover, $\kappa^s=e^{s\log\kappa}\ge1+s/2$. Differentiating $f$ twice and dividing by the positive factor $(56/(3\pi))^s$, we find
\begin{align*}
 \left(\frac{3\pi}{56}\right)^sf''(s)
 &=(1-s)L^2-2L+\frac{21}{80}\kappa^s(2M+sM^2)\\
 &\ge\frac{49}{16}(1-s)-\frac{18}{5}
 +\frac{21}{80}\left(1+\frac{s}{2}\right)\left(5+\frac{25s}{4}\right)\\
 &=\frac{31}{40}(1-s)+\frac{3s}{320}+\frac{105s^2}{128}>0.
\end{align*}
Hence $f$ lies above its tangent at zero, and
\[
 f(s)\ge f(0)+sf'(0)\ge1+s\ge1+s2^{-s}.
\]
This proves \eqref{e:parameter-comparison}, hence $\Gamma_{s,a}\ge0$ and the pointwise comparison \eqref{e:combined-cost}.

\smallskip
\noindent\emph{Combining the estimates.}
Multiply \eqref{e:combined-cost} by $2c_s/[s(1-s)]$, substitute $r=|u(x)-u(y)|$, and integrate over $I^2$. The coefficient identity noted above and Lemma~\ref{l:direct-deficit} give
\begin{align*}
 &\frac{9c_s}{16(1-s)}J
 +2c_s\int_I\int_I F_s(|u(x)-u(y)|)\,dy\,dx\\
 &\qquad\ge\frac{b_s}{2}\int_I\int_I B_{s,a}(|u(x)-u(y)|)\,dy\,dx
 \ge b_sD_s(u).
\end{align*}
Using \eqref{e:reserve} in the lower bound for $\Q_s(u)$ at the start of the proof, we therefore obtain
\begin{align*}
 \Q_s(u)
 &\ge\frac{J}{8(1-s)}+\frac{9c_s}{16(1-s)}J
 +2c_s\int_I\int_I F_s(|u(x)-u(y)|)\,dy\,dx\\
 &\ge\frac{J}{8(1-s)}+b_sD_s(u).
\end{align*}

It remains to pass from pairwise differences to deviation from the mean. For almost every fixed $x$, convexity of $\Psi$ and $|I|=1$ give
\[
 \int_I\Psi(u(x)-u(y))\,dy
 \ge\Psi\!\left(\int_I(u(x)-u(y))\,dy\right)
 =\Psi(u(x)-a).
\]
Integrating the last inequality in $x$ and returning to \eqref{e:cost-loss}, we conclude that
\[
 \Pp_s(E_u)-\Pp_s(S_a)
 \ge\Q_s(u)-b_sD_s(u)
 \ge\frac{J}{8(1-s)}
 \ge\frac1{8(1-s)}\int_I\Psi(u-a)\,dx,
\]
which is the asserted estimate.
\end{proof}

\begin{proof}[Proof of Theorem~\ref{t:main}]
Let $|E\cap(I\times\R)|=2a$ with $a\ge14$. Since $\Pp_s(S_a)<\infty$, we may assume $\Pp_s(E)<\infty$. By \eqref{e:rearrangement} and Proposition~\ref{p:profile},
\[
 \Pp_s(E)\ge\Pp_s(E^\star)
 \ge\Pp_s(S_a)+\frac{1}{8(1-s)}\int_I\Psi(u-a)\,dx.
\]
This proves minimality. Equality forces $\int_I\Psi(u-a)=0$, hence $u=a$ almost everywhere because $\Psi\ge0$ vanishes only at zero. The same chain forces $\Pp_s(E)=\Pp_s(E^\star)<\infty$. Section~\ref{ss:reduction} then gives $E=S_a+(0,z_0)$ up to a null set. Conversely, each such translate has the same area per period and perimeter as $S_a$.
\end{proof}


\begin{thebibliography}{99}
\bibitem{AlcoverBruera}
F. Alcover and R. Bruera,
\emph{Periodic Delaunay cylinders with constant anisotropic nonlocal mean curvature},
preprint, arXiv:2602.18219 (2026).

\bibitem{Alvinya}
M. Alviny\`a,
\emph{Delaunay cylinders with constant non-local mean curvature},
Master's thesis, Universitat Polit\`ecnica de Catalunya, 2017.

\bibitem{Bruera2026}
R. Bruera,
\emph{On the shape of minimizers for the periodic nonlocal perimeter in $\mathbb R^2$},
preprint, arXiv:2602.18215 (2026).

\bibitem{CCM}
X. Cabr\'e, G. Csat\'o, and A. Mas,
\emph{Existence and symmetry of periodic nonlocal-CMC surfaces via variational methods},
J. Reine Angew. Math. \textbf{804} (2023), 11--40.

\bibitem{CFSW}
X. Cabr\'e, M. M. Fall, J. Sol\`a-Morales, and T. Weth,
\emph{Curves and surfaces with constant nonlocal mean curvature: meeting Alexandrov and Delaunay},
J. Reine Angew. Math. \textbf{745} (2018), 253--280.

\bibitem{CRS}
L. Caffarelli, J.-M. Roquejoffre, and O. Savin,
\emph{Nonlocal minimal surfaces},
Comm. Pure Appl. Math. \textbf{63} (2010), 1111--1144.

\bibitem{DDPV}
J. D\'avila, M. del Pino, S. Dipierro, and E. Valdinoci,
\emph{Nonlocal Delaunay surfaces},
Nonlinear Anal. \textbf{137} (2016), 357--380.

\bibitem{Ponce}
A. C. Ponce,
\emph{An estimate in the spirit of Poincar\'e's inequality},
J. Eur. Math. Soc. \textbf{6} (2004), no. 1, 1--15.

\end{thebibliography}
\end{document}